\newtheorem{thm}{Theorem}
\newtheorem{prop}{Proposition}
\newtheorem{defin}{Definition}
\newtheorem{rmk}{Remark}
\newtheorem{cor}{Corollary}
\newtheorem{lem}{Lemma}
\title{A new kind of automorphic form and a proof of the essential transformation laws}
\author{Michael Andrew Henry\footnote{The author can be reached uninterrupted at locrian.gates@gmail.com. This research has been supported by the Austrian Science Fund (FWF), grant number W1230. } [TU Graz]}
\date{\today}
\begin{document}

\maketitle

\begin{abstract}
    We utilize the structure of quasiautomorphic forms over an arbitrary Hecke triangle group to define a new vector analogue of an automorphic form. We supply a proof of the functional equations that hold for these functions modulo the group generators.    
\end{abstract}

{Keywords: Automorphic form; Quasiautomorphic form; Hecke triangle group; Modular form; Quasimodular form; Hecke vector-form}

\section{Introduction}
In Dijkgraaf's influential work \cite{dijkgraaf1995}, we find a quasimodular form making an appearance as a generating function with applications in mirror symmetry. The study \cite{kanekoZagier1995} cemented these functions as robust enumerative tools with an algebraic structure similar to, but ultimately more complicated than, modular forms. For us, modular forms \cite{zagier2008} are just automorphic forms with respect to the modular group, dispelling any naming confusion. 

In this essay we develop further the theory of quasiautomorphic forms. In particular, we utilize an arbitrary quasiautomorphic form $ U(z) $ with respect to some Hecke triangle group $ \mathfrak{t}_\mu $ to define an analogue $ \mathbf{F}_U (z) $ of an automorphic form \emph{qua} $ \mathfrak{t}_\mu $. The function $ \mathbf{F}_U $ is the ``new automorphic form'' announced in the title. The seemingly paradoxical construction works because, beginning with the holomorphic function $ U $ for $ z \in \mathbb{H} $, the resulting object $ \mathbf{F}_U $ is now a vector function, something we call a Hecke vector-form. After defining a Hecke vector-form, we prove the transformation formulas for $ \mathbf{F}_U $ modulo the generators of the associated Hecke triangle group.  

\section{Quasiautomorphic forms on Hecke triangle groups}
Detailed background of triangle groups and automorphic forms for this section can be found in \cite{doranEtAl2013} or with an emphasis similar to this paper in \cite{henry2025}; this note is a simplified and condensed exposition of the most fundamental ideas of the latter study. 

Let us begin with a quasiautomorphic form $ U_{w,r}(z) $ ($ z\in\mathbb{H} $) of weight $ w $ and depth $ r $ over some arbitrary Hecke triangle group, $ \mathfrak{t}_\mu = (2, \mu, \infty) $ for when $ 3 \leq \mu \in \mathbb{Z} $ holds (this triplet notation is  fully explained in \cite{hille1962} and \cite{nehari1952}). We recall that if $ 0 \leq r \leq w/2 $, then
    \begin{equation}\label{eq:standarForm}
        U_{\mathfrak{t}_\mu, w,r}(z) :=  \sum_{k=0}^{r} H_{w-2k}(z)\, E^k_2 (z) 
    \end{equation}
where $ H_{w-2k} \in \mathcal{A}_{w-2k}(\mathfrak{t}_\mu) $ i.e. $ H_{w-2k}(z) $ is an element of the vector space of automorphic forms of weight $  w -2k $ with respect to $ \mathfrak{t}_\mu $.  For orientation, the Hecke triangle group $ \mathfrak{t}_3 = (2,3,\infty) $ is just the classical modular group and thus our study entails results about the classical quasimodular forms. Regarding the notation $ U_{\mathfrak{t}_\mu, w, r} $, we will oscillate between writing $ U $, $ U_{ w, r} $, $U_{\mathfrak{t}_\mu, w, r} $, depending on the desired emphasis.

We also introduce and use throughout the notation
    \begin{eqnarray*}
        \{ r,\ell \}_m :=  \frac{(m+\ell)!(r-\ell)!}{m!\,r!} 
    \end{eqnarray*}
to avoid clutter (for us the condition $ r \geq \ell \geq 0 $ will always be satisfied, $ \ell, r \in \mathbb{N}\cup\{0\} $ holding). With respect to this notation, observe that the equality 
    \begin{eqnarray}\label{eq:coeffEquiv}
        \binom{m+p}{p}\{r,\ell \}_{m+p} = \binom{r-\ell}{m}\{r,m+\ell \}_p
    \end{eqnarray}
holds. 

To establish our theory, beginning with $ U_{\mathfrak{t}_\mu, w,r} $, we will consider a set of functions dependent on $ U_{\mathfrak{t}_\mu, w,r} $ as follows:
    \begin{eqnarray*}
        f_0(z) &=& g_0 \\
        f_1(z) &=& g_0z + g_1  \\
        f_2(z) &=& g_0z^2 + 2g_1z + g_2  \\
        f_3(z) &=& g_0z^3 + 3g_1z^2 + 3g_2z + g_3  \\
        & \vdots & 
    \end{eqnarray*}
or elements defined generally by 
    \begin{eqnarray}\label{eq:defFAux}
        f_n(z) &:=& \sum_{k=0}^n \binom{n}{k}g_{k}z^{n-k},
    \end{eqnarray}
($ 0 \leq n \leq r\leq w/2$ satisfied) where $ g_{\ell} $ is the function
    \begin{eqnarray}\label{ex:defGAux}
        g_{\ell}(z) := C^\ell  \sum_{m=0}^{r-\ell}\{ r,\ell \}_m B_{U, \ell + m}(z)\, E_{2}^{m}(z),
    \end{eqnarray}
$ C_{\mathfrak{t}_\mu} = C  $ is the unique structure constant associated to the weight $ w =2 $ Eisenstein series $ E_{2}(z) $ modulo $ \mathfrak{t}_\mu $ (i.e. $ \lim_{z\rightarrow i\infty}E_{\mathfrak{t}_\mu, 2}(z) = C_{\mathfrak{t}_\mu} $
holds), and $  B_{U, k} : = H_{w-2k}  $ from the sum representation of $ U_{\mathfrak{t}_\mu, w,r} $ given at \eqref{eq:standarForm}. As we should expect from \eqref{ex:defGAux}, $ z $ will always be restricted to the upper-half complex plane. 

    \begin{rmk}
        Given $ U_{\mathfrak{t}_\mu, w,r} $, it is evident that $ g_0 $ is a weight $ w $ and depth $ r $ quasiautomorphic form modulo $ \mathfrak{t}_\mu $; similarly, each function $ g_k $ is a quasiautomorphic form of weight $ w-2k $, depth $ r-k  $ or what can be viewed simply as a truncation of $ g_0 $ such that homogeneity of the weight is preserved in each term. 
    \end{rmk}

To motivate, the finite sequence $ \mathbf{f} = (f_k)^r_{k=0} $ of elements given at \eqref{eq:defFAux} comes from defining two auxiliary sequences $ \mathbf{g} = (g_\ell)^r_{\ell =0}
 $ and $ \mathbf{z} = (z^j)^r_{j=0} $ such that the equality
    \begin{eqnarray}\label{ex:fAux}
        f_n = \sum_{k=0}^{n}\binom{n}{k}g_{k}{z^{n-k}}    
    \end{eqnarray}
holds i.e. $ f_n $ is defined as the $ n $-th binomial convolution of $ \mathbf{g} $ and $ \mathbf{z} $ where $ 0 \leq n \leq r $ holds. A convolution may be found written elsewhere as $ f_n = (\mathbf{g}\circ\mathbf{z})_n $, but we mostly avoid this notation; however, for background on this idea see \cite{grahamKnuthPatashnik1989}. One immediate implication of the set-up is we have that the so-called orthogonality relation
    \begin{eqnarray*}\label{eq:orthogonality}
        \frac{f_n}{z^n} = \sum_{i=0}^{n} \binom{n}{i}\frac{g_i}{z^{i}} \quad \Longleftrightarrow \quad \frac{g_n}{z^n}= \sum_{j=0}^{n}(-1)^{n-j} \binom{n}{j}{f_j}{z^j} 
    \end{eqnarray*}
holds.

\section{Matrix theory for Hecke vector-forms}
We will need some special matrices, some for which we use non-standard notation (the reader can find any unmentioned background in \cite{hornJohnson1985}). 
    
    \begin{rmk}
        In our matrix notation, an absence of any indicating entry $ a_i $ always means $ a_i = 0 $ i.e. a blank space is to be assumed to take a zero.
    \end{rmk}
    
We can motivate the introduction of matrix theory here by observing
    \begin{eqnarray}\label{eq:matrixConvolutionIdentity}
        \begin{pmatrix}
            f_0 \\ f_1 \\ \vdots \\ f_r
        \end{pmatrix}
            =
        \begin{pmatrix}
            \binom{0}{0} \\
            \binom{1}{0}z & \binom{1}{1} \\
            \vdots &\vdots & \ddots \\
            \binom{r}{0}z^r & \binom{r}{1}z^{r-1} & \cdots & \binom{r}{r} 
        \end{pmatrix}
        \begin{pmatrix}
            g_0 \\
            g_1 \\
            \vdots \\
            g_r
        \end{pmatrix}
    \end{eqnarray}
holds, involving some pleasingly natural and well-behaved matrices. Clearly, \eqref{eq:matrixConvolutionIdentity} is just the matrix summary of the convolution structure we saw in the previous section.

    \begin{defin}
        For the $ (r+1)\times(r+1) $ lower triangular matrix known as the \bf{generalized Pascal matrix}\normalfont, we write
            \begin{eqnarray*}
                P_r(z) := 
                    \begin{pmatrix}
                        \binom{0}{0} \\
                        \binom{1}{0}z & \binom{1}{1} \\
                        \vdots &\vdots & \ddots \\
                        \binom{r}{0}z^r & \binom{r}{1}z^{r-1} & \cdots & \binom{r}{r} 
                    \end{pmatrix}.
                \end{eqnarray*}
            \end{defin}
For more about these matrices see e.g. \cite{acetoTriguante2001} and \cite{edelmanStrang2004} and the references cited therein.  
    \begin{defin}\label{def:creationMatrix}
        Let
            \begin{eqnarray*}
                A_r(\lambda) := 
                    \begin{pmatrix}
                         \lambda  \\
                        1 & \lambda  \\
                        & \ddots & \ddots &  \\
                        & & r& \lambda
                    \end{pmatrix}.
            \end{eqnarray*}
        Then the so-called \bf creation matrix \normalfont is defined by the special instance, $  A_r := A_r(0) $. 
    \end{defin}
    
The terminology in Definition \ref{def:creationMatrix} comes from the paper \cite{acetoMalonekTomaz2015}. It is both well-known and useful that the relation $A_r^{s} = {0}$ holds for $ s \geq r + 1 $ i.e. the creation matrix $ A_r $ is a nilpotent matrix of index $ r+1 $.

Additionally, it can be checked that the characteristic polynomial of $ A_r^{\mathbf{t}}(\lambda) $ satisfies
    \begin{eqnarray*}
        \text{{char}}_{A_r(\lambda)}(X) = \sum^{r}_{k=0}\binom{r}{k}X^{r-k}\lambda^k = (X - \lambda)^r.
    \end{eqnarray*}
One interpretation of this via the Cayley-Hamiliton theorem is $ A_r^{\mathbf{t}}(\lambda) $ is similar to a Jordan matrix, $ J_r(\lambda) $, one of the most essential matrices \cite{hornJohnson1985}.

    \begin{defin}
        The $(r+1) \times (r+1) $ so-called \bf exchange matrix \normalfont we denote 
            \begin{eqnarray*}
                \iota_r := 
                    \begin{pmatrix}
                        &&&1\\
                        &&1\\
                        & \iddots \\
                        1
                    \end{pmatrix}.
            \end{eqnarray*}
    \end{defin}

The exchange matrix (as it is called in \cite{hornJohnson1985}) is often hidden behind the matrix transpose operator. For example, with some loss of generality, for a $ (r+1) \times (r+1) $ matrix $ M $, the relation $ M^\mathbf{t} = \iota_r M \iota_r  $ holds. We shall want to have on hand a left ``half-transpose'' and a right ``half-transpose,'' namely the dual operators
    \begin{eqnarray*}
        M^\mathbf{x} := \iota_r M \quad \text{and} \quad
        M^\mathbf{y} := M\iota_r.
    \end{eqnarray*}

We shall also want to use the familiar $(r+1) \times (r+1)$ diagonal matrix that we denote by
    \begin{eqnarray*}
        d_r(a_i) :=
            \begin{pmatrix}
                a_0\\
                & a_1 \\
                &&\ddots \\
                &&& a_r
            \end{pmatrix}.
    \end{eqnarray*}

For the first application of matrix theory, we utilize the matrix exponential function $ e^M := \sum_{n=0}^{\infty}\frac{M^n}{n!} $, $M$ square, to give
   \begin{lem}\label{lem:pascalExp}
        The relation
            \begin{eqnarray*}
                P_r(z) = e^{z A_r}
            \end{eqnarray*}
        holds. 
    \end{lem}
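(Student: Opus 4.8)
The plan is to prove $P_r(z) = e^{zA_r}$ by computing the powers of the creation matrix $A_r = A_r(0)$ explicitly and then summing the exponential series directly. First I would establish that $A_r$ acts as a ``weighted shift'': its only nonzero entries are $(A_r)_{i+1,i} = i+1$ for $0 \le i \le r-1$ (indexing rows and columns from $0$). A short induction then shows that the $k$-th power $A_r^k$ is supported on the $k$-th subdiagonal, with
\begin{eqnarray*}
    (A_r^k)_{i+k,\,i} = (i+1)(i+2)\cdots(i+k) = \frac{(i+k)!}{i!},
\end{eqnarray*}
and all other entries zero; this also recovers the stated nilpotency $A_r^{r+1} = 0$, so the exponential series terminates after the term $k = r$.

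Next I would assemble the sum $e^{zA_r} = \sum_{k=0}^r \frac{z^k}{k!} A_r^k$ entrywise. The $(i+k, i)$ entry picks up exactly one contribution, namely $\frac{z^k}{k!}\cdot\frac{(i+k)!}{i!} = \binom{i+k}{k} z^k$. Writing $n = i+k$ for the row index, the $(n,i)$ entry of $e^{zA_r}$ equals $\binom{n}{n-i} z^{n-i} = \binom{n}{i} z^{n-i}$, which is precisely the $(n,i)$ entry of $P_r(z)$ as displayed in its definition; entries above the diagonal ($i > n$) vanish on both sides. This matches term by term, giving the claimed identity.

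An alternative, perhaps slicker route is to verify the two defining features of the exponential: that $z \mapsto P_r(z)$ is a one-parameter semigroup, $P_r(z)P_r(w) = P_r(z+w)$ (the Vandermonde/Pascal convolution $\sum_j \binom{n}{j}\binom{j}{i} z^{n-j} w^{j-i} = \binom{n}{i}(z+w)^{n-i}$), together with $P_r(0) = I$ and $\frac{d}{dz}P_r(z)\big|_{z=0} = A_r$, the latter being immediate from differentiating $\binom{n}{i} z^{n-i}$ and setting $z = 0$. Uniqueness of solutions to the matrix ODE $Y'(z) = A_r Y(z)$, $Y(0) = I$, then forces $P_r(z) = e^{zA_r}$. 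I would probably present the direct computation as the main argument since it is self-contained, and mention the semigroup viewpoint as a remark.

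I do not anticipate a genuine obstacle here; the only thing requiring care is bookkeeping with the index conventions (rows/columns starting at $0$, and the off-by-one in $(A_r)_{i+1,i} = i+1$ versus the subdiagonal entries $1, 2, \dots, r$ in Definition \ref{def:creationMatrix}), so that the factorial product $\frac{(i+k)!}{i!}$ comes out correctly and cancels against $k!$ to leave the binomial coefficient. Getting that cancellation transparent is the crux of making the proof clean.
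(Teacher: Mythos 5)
Your proof is correct and complete; the paper itself dispenses with this lemma by writing ``Exercise,'' so there is no argument to compare against. Your direct computation --- showing $(A_r^k)_{i+k,i} = (i+k)!/i!$ on the $k$-th subdiagonal, truncating the exponential series at $k=r$ by nilpotency, and matching $\frac{z^k}{k!}\cdot\frac{(i+k)!}{i!} = \binom{n}{i}z^{n-i}$ against the $(n,i)$ entry of $P_r(z)$ --- is exactly the standard verification, and the semigroup/ODE alternative you sketch is also sound.
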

    \begin{proof}
        Exercise.
    \end{proof}

For what is to follow, let us fix the notation
    \begin{eqnarray*}
        \mathbf{F}_U(z) := 
            \begin{pmatrix}
                f_0(z) & f_1(z) & \cdots & f_r(z)
            \end{pmatrix}^{\mathbf{t}}
    \end{eqnarray*}
and
    \begin{eqnarray*}
        \mathbf{G}_U(z) := 
            \begin{pmatrix}
                g_0(z) & g_1(z) & \cdots & g_r(z)
            \end{pmatrix}^{\mathbf{t}},
    \end{eqnarray*}
where we have made the dependence of these vectors on $ U $ explicit. 

    \begin{lem}\label{lem:expForm}
        The relation 
            \begin{eqnarray*}
                \mathbf{F}_U(z) = e^{z A_r} \mathbf{G}_U(z)
            \end{eqnarray*}
        holds. 
    \end{lem}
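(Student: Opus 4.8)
The plan is to observe that this lemma is essentially a restatement of the matrix convolution identity \eqref{eq:matrixConvolutionIdentity} combined with the identification of the generalized Pascal matrix as a matrix exponential provided by Lemma \ref{lem:pascalExp}. Concretely, equation \eqref{eq:matrixConvolutionIdentity} says precisely that
\begin{eqnarray*}
    \mathbf{F}_U(z) = P_r(z)\,\mathbf{G}_U(z),
\end{eqnarray*}
once we unpack the column vectors $\mathbf{F}_U(z) = (f_0(z), \dots, f_r(z))^{\mathbf t}$ and $\mathbf{G}_U(z) = (g_0(z), \dots, g_r(z))^{\mathbf t}$ and recall that the matrix appearing in \eqref{eq:matrixConvolutionIdentity} is by definition $P_r(z)$. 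Then I would substitute the identity $P_r(z) = e^{zA_r}$ from Lemma \ref{lem:pascalExp} to conclude $\mathbf{F}_U(z) = e^{zA_r}\mathbf{G}_U(z)$, which is the claim.

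First I would spell out why \eqref{eq:matrixConvolutionIdentity} genuinely holds at the level of each component: the $n$-th row of the product $P_r(z)\mathbf{G}_U(z)$ is $\sum_{k=0}^{r} (P_r(z))_{n,k}\, g_k(z)$, and since $P_r(z)$ is lower triangular with $(n,k)$-entry $\binom{n}{k}z^{n-k}$ for $k \leq n$ and zero otherwise, this sum collapses to $\sum_{k=0}^{n}\binom{n}{k}z^{n-k}g_k(z)$, which is exactly the defining formula \eqref{eq:defFAux} (equivalently \eqref{ex:fAux}) for $f_n(z)$. Hence $P_r(z)\mathbf{G}_U(z)$ and $\mathbf{F}_U(z)$ agree entrywise. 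This step is purely bookkeeping — matching the binomial convolution definition of the $f_n$ against the rows of the Pascal matrix — and carries no real difficulty.

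The only genuinely substantive ingredient is Lemma \ref{lem:pascalExp}, $P_r(z) = e^{zA_r}$, which the excerpt leaves as an exercise; if I wanted the present proof to be self-contained I would include a one-line verification, noting that $A_r$ is the nilpotent subdiagonal matrix with entries $(A_r)_{k+1,k} = k+1$, so $(A_r^j)_{n,k} = \frac{n!}{k!}$ when $n - k = j$ and zero otherwise (an induction on $j$), whence $(e^{zA_r})_{n,k} = \sum_{j\geq 0}\frac{z^j}{j!}(A_r^j)_{n,k} = \frac{z^{n-k}}{(n-k)!}\cdot\frac{n!}{k!} = \binom{n}{k}z^{n-k}$, and the series terminates because $A_r^{r+1} = 0$. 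But since Lemma \ref{lem:pascalExp} is already established above, I can simply invoke it. Therefore the main obstacle is essentially nil: the proof is a two-line chain, $\mathbf{F}_U(z) \overset{\eqref{eq:matrixConvolutionIdentity}}{=} P_r(z)\mathbf{G}_U(z) \overset{\text{Lem.}~\ref{lem:pascalExp}}{=} e^{zA_r}\mathbf{G}_U(z)$, and the only place where care is needed is making sure the reader agrees that \eqref{eq:matrixConvolutionIdentity} is literally the componentwise statement of the convolution definition of $\mathbf{f}$.
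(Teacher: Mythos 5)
Your proof is correct and follows exactly the same route as the paper, which also derives the lemma by combining the matrix convolution identity \eqref{eq:matrixConvolutionIdentity} with Lemma \ref{lem:pascalExp}. The extra componentwise verification and the sketch of the Pascal-matrix exponential are sound but not needed beyond what the paper's one-line proof already invokes.
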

    \begin{proof}
        Using Lemma \ref{lem:pascalExp} and \eqref{eq:matrixConvolutionIdentity} the result is immediate. 
    \end{proof}

We can also give a second useful representation of $ \mathbf{F}_U $. Again, depending on $ U $, suppose that we write (along lines suggested by $ P_r(z) $)
    \begin{eqnarray*}
        P(\mathbf{G}_U)(z) :=
            \begin{pmatrix}
                \binom{0}{0}g_0(z) \\
                \binom{1}{0} g_1(z) & \binom{1}{1}g_0(z) \\
                \vdots & \vdots & \ddots \\
                \binom{r}{0}g_r(z) & \binom{r}{1}g_{r-1}(z) & \cdots & \binom{r}{r}g_{0}(z)  
            \end{pmatrix}  
    \end{eqnarray*}
and the monomial-powers vector we write as
    \begin{equation*}
        \nu_r(z) := 
        \begin{pmatrix}
            1 & z & \cdots & z^r 
        \end{pmatrix}^\mathbf{t}.
    \end{equation*}
The following fact is a simple application of this notation.

    \begin{lem}\label{lem:VectorFormEquivalence}
        The relation
            \begin{eqnarray*}
                \mathbf{F}_{U}(z) = P(\mathbf{G}_U)(z)\,\nu_r(z)
            \end{eqnarray*}
        holds. 
    \end{lem}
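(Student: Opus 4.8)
The plan is to unravel both sides of the claimed identity $\mathbf{F}_U(z) = P(\mathbf{G}_U)(z)\,\nu_r(z)$ componentwise and observe that it is just a restatement of the defining convolution \eqref{eq:defFAux}. First I would compute the $n$-th entry of the right-hand side. The matrix $P(\mathbf{G}_U)(z)$ is lower triangular with $(n,k)$-entry $\binom{n}{k}g_{n-k}(z)$ for $0\le k\le n$ (and zero above the diagonal, per the blank-space convention of the earlier remark), while $\nu_r(z)$ has $k$-th entry $z^k$. Hence the $n$-th entry of the product is $\sum_{k=0}^{n}\binom{n}{k}g_{n-k}(z)\,z^{k}$. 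Re-indexing the sum via $j = n-k$ turns this into $\sum_{j=0}^{n}\binom{n}{j}g_{j}(z)\,z^{n-j}$, which is exactly $f_n(z)$ by \eqref{eq:defFAux}. Since the $n$-th entry of $\mathbf{F}_U(z)$ is by definition $f_n(z)$, the two vectors agree entrywise for $0\le n\le r$, establishing the claim.

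Alternatively, and perhaps more in the spirit of the matrix formalism being developed, I would deduce it directly from Lemma~\ref{lem:expForm} together with the structure of $P_r(z) = e^{zA_r}$. One observes that $P(\mathbf{G}_U)(z)\,\nu_r(z)$ and $P_r(z)\,\mathbf{G}_U(z)$ produce the same vector: the entry of $P_r(z)\mathbf{G}_U(z)$ in position $n$ is $\sum_{k=0}^n \binom{n}{k}z^{n-k}g_k(z)$, and the entry of $P(\mathbf{G}_U)(z)\nu_r(z)$ in position $n$ is $\sum_{k=0}^n \binom{n}{k}g_{n-k}(z)z^k$; these coincide after the substitution $k \mapsto n-k$. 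So $P(\mathbf{G}_U)(z)\,\nu_r(z) = P_r(z)\,\mathbf{G}_U(z) = e^{zA_r}\mathbf{G}_U(z)$, and the last expression equals $\mathbf{F}_U(z)$ by Lemma~\ref{lem:expForm} (equivalently, directly by \eqref{eq:matrixConvolutionIdentity} and Lemma~\ref{lem:pascalExp}). Either route is essentially immediate.

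I do not anticipate any real obstacle here; the only thing requiring the slightest care is bookkeeping the two ways of slicing the binomial convolution — reading it as ``fixed vector of powers against a Pascal-type matrix of $g$'s'' versus ``fixed Pascal matrix of powers against a vector of $g$'s'' — and checking that the reindexing $k \leftrightarrow n-k$ reconciles them, using the symmetry $\binom{n}{k} = \binom{n}{n-k}$ only implicitly through the change of summation variable. I would therefore present the proof in one or two lines: expand the $n$-th component of $P(\mathbf{G}_U)(z)\nu_r(z)$, reindex, and recognize $f_n(z)$ from \eqref{eq:defFAux}.
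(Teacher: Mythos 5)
Your proof is correct and is essentially the paper's own argument — the paper simply declares the identity clear from the meaning of the notation $P(\mathbf{G}_U)(z)$ together with \eqref{eq:matrixConvolutionIdentity}, and your componentwise expansion with the reindexing $k \mapsto n-k$ is exactly the detail being left implicit. Your alternative route via Lemma~\ref{lem:expForm} is also valid but amounts to the same bookkeeping.
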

    \begin{proof}
        This is clear, considering the meaning of the notation  $ P(\mathbf{G}_U)(z) $ alongside \eqref{eq:matrixConvolutionIdentity}.
    \end{proof}

In a culmination of the section we introduce a
    \begin{defin}\label{def:HeckeVectorForm}
        We call the function $ \mathbf{F}_U\left(z \right) $ defined on the upper-half complex plane the \bf{Hecke vector-form} \normalfont of $ U $; the function $ \mathbf{G}_U(z) $ of Lemma \ref{lem:expForm} is called the \bf{hauptbuch} \normalfont of $U$; the function $ P(\mathbf{G}_U)(z) $ of Lemma \ref{lem:VectorFormEquivalence} is called the \bf{transfer matrix }\normalfont of $ U $.
    \end{defin}

    \begin{rmk}
        Definition \ref{def:HeckeVectorForm} comes originally from \cite{henry2025}, albeit there in a slightly more complicated form.
    \end{rmk}

\section{Transformation equations for the generators}
The precedent for this section comes from \cite{grabner2020}, \emph{viz} Proposition 3.2, but we have interpreted the result differently.

We recall the generators of the Hecke triangle group, $ \mathfrak{t}_\mu $. If $ \varpi_\mu := 2 \cos \frac{\pi}{\mu} $, then for $ z \in \mathbb{H} $
    \begin{eqnarray*}
        Tz :=  z + \varpi_\mu \quad \text{and} \quad 
        Sz  :=  -\frac{1}{z}
    \end{eqnarray*}
are the two independent generators of $ \mathfrak{t}_\mu $.

    \begin{rmk}
        We see that $ Tz = z + \varpi_\mu $ depends on $\mathfrak{t}_\mu $; however, evidently, $ Sz $ remains fixed. 
    \end{rmk}

The next goal will be to determine the behavior of Hecke vector forms under these two maps essential to $ \mathfrak{t}_\mu $. 
    \begin{lem}\label{lem:hauptbuchUnderT}
        Let $ U_{\mathfrak{t}_\mu, w, r} $ be some quasiautomorphic form. If $\mathbf{G}_U(z) $ is the hauptbuch of $ U_{\mathfrak{t}_\mu, w, r} $, then 
            \begin{eqnarray*}
                \mathbf{G}_U(Tz) = \mathbf{G}_U(z)
            \end{eqnarray*}
        holds.
    \end{lem}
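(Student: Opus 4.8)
The plan is to reduce the vector identity to its components and then exploit the fact that $T$ is the \emph{parabolic} generator of $\mathfrak{t}_\mu$, so that every building block of the hauptbuch is literally invariant under $z \mapsto z + \varpi_\mu$. Writing $\mathbf{G}_U(z) = (g_0(z), \dots, g_r(z))^{\mathbf{t}}$, it suffices to prove $g_\ell(Tz) = g_\ell(z)$ for each $0 \le \ell \le r$. By \eqref{ex:defGAux} each $g_\ell$ is a finite $\mathbb{C}$-linear combination, with $z$-independent coefficients $C^\ell \{r,\ell\}_m$, of the functions $B_{U,\ell+m}(z)\, E_2^m(z)$, so everything comes down to showing that both $B_{U,\ell+m}$ and $E_2$ are $T$-invariant.

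For the automorphic factors $B_{U,k} = H_{w-2k} \in \mathcal{A}_{w-2k}(\mathfrak{t}_\mu)$ this is immediate from the definition of an automorphic form: realizing $T$ as $\begin{pmatrix} 1 & \varpi_\mu \\ 0 & 1 \end{pmatrix}$, the lower-left entry is $c = 0$, whence the automorphy factor $(cz+d)^{w-2k} = 1$ and $H_{w-2k}(Tz) = H_{w-2k}(z)$. For $E_2$ one invokes the quasiautomorphic transformation law of the weight-$2$, depth-$1$ Eisenstein series: under $\gamma = \begin{pmatrix} a & b \\ c & d \end{pmatrix} \in \mathfrak{t}_\mu$ the deviation of $E_2$ from strict automorphy is a term proportional to $c(cz+d)$ (equivalently, proportional to $c/(cz+d)$ after dividing out the weight factor). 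Since $T$ has $c = 0$, this defect vanishes identically, giving $E_2(Tz) = E_2(z)$; indeed, this periodicity is already presupposed by the very existence of the limit $C = \lim_{z\to i\infty} E_{\mathfrak{t}_\mu,2}(z)$ used to define the structure constant.

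Combining the two invariances, every summand $B_{U,\ell+m}(z)\, E_2^m(z)$ is fixed by $T$, and since the coefficients are independent of $z$ we obtain $g_\ell(Tz) = g_\ell(z)$ for every $\ell$; stacking the components back into the column vector yields $\mathbf{G}_U(Tz) = \mathbf{G}_U(z)$. There is no genuine obstacle here: the only point deserving explicit comment is the vanishing of the quasi-period of $E_2$ under the unipotent generator $T$ — that the ``quasi'' correction, being proportional to the lower-left entry $c$, is annihilated by $T$ — which is exactly what makes the behavior under $T$ trivial in contrast to the substantive behavior under $S$ to be handled next.
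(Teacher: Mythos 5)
Your proof is correct and follows essentially the same route as the paper, which simply notes that $U$ (hence each $g_\ell$) is periodic under $Tz$; you have merely spelled out the underlying reason, namely that the automorphic factors $H_{w-2k}$ and the quasiautomorphic defect of $E_2$ both trivialize because the parabolic generator has $c=0$.
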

    \begin{proof}
        Recall that $ U_{\mathfrak{t}_\mu, w, r} $ is periodic under $ Tz $ and thus $ g_\ell $ is periodic under $ Tz $.
    \end{proof}

    \begin{cor}
        The relation
            \begin{eqnarray*}
                P(\mathbf{G}_U)(Tz) = P(\mathbf{G}_U)(z) 
            \end{eqnarray*}
        holds. 
    \end{cor}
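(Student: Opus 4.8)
The plan is to observe that $P(\mathbf{G}_U)(z)$ carries no explicit dependence on $z$ beyond what enters through the scalar components $g_0(z),\dots,g_r(z)$ of the hauptbuch: indeed, by the very definition of the notation $P(\mathbf{G}_U)(z)$ introduced before Lemma \ref{lem:VectorFormEquivalence}, its $(i,j)$ entry is $\binom{i}{j}\,g_{i-j}(z)$ when $j\le i$ and $0$ otherwise. Since the binomial coefficients are constants, the matrix-valued function $z\mapsto P(\mathbf{G}_U)(z)$ factors through the vector-valued function $z\mapsto \mathbf{G}_U(z)$.

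First I would record that Lemma \ref{lem:hauptbuchUnderT} gives $\mathbf{G}_U(Tz)=\mathbf{G}_U(z)$, which componentwise says $g_\ell(Tz)=g_\ell(z)$ for every $\ell$ with $0\le \ell\le r$. Substituting $Tz$ for $z$ in each entry of $P(\mathbf{G}_U)$ and invoking this periodicity entry by entry then yields
    \begin{eqnarray*}
        \bigl(P(\mathbf{G}_U)(Tz)\bigr)_{ij} = \binom{i}{j}\,g_{i-j}(Tz) = \binom{i}{j}\,g_{i-j}(z) = \bigl(P(\mathbf{G}_U)(z)\bigr)_{ij}
    \end{eqnarray*}
for all admissible $i,j$, and hence $P(\mathbf{G}_U)(Tz)=P(\mathbf{G}_U)(z)$ as matrices.

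I do not anticipate any real obstacle here: the corollary is essentially a bookkeeping consequence of Lemma \ref{lem:hauptbuchUnderT}, the only content being the remark that $P(\mathbf{G}_U)$ is assembled from the $g_\ell$ alone. If one preferred a coordinate-free phrasing, one could instead note that $P(-)$ is a fixed linear operator sending a vector to a matrix, so $P(\mathbf{G}_U(Tz))=P(\mathbf{G}_U(z))$ follows immediately from $\mathbf{G}_U(Tz)=\mathbf{G}_U(z)$; either route closes the argument in one line.
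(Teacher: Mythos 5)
Your proof is correct and takes essentially the same approach as the paper, which simply cites Lemma \ref{lem:hauptbuchUnderT}; you have merely spelled out the entrywise bookkeeping that the paper leaves implicit.
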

    \begin{proof}
        This is clear from Lemma \ref{lem:hauptbuchUnderT}.
    \end{proof}

    \begin{thm}[$ \mathbf{F}_U $ under $ Tz $]\label{thm:vectorFormUnderT}
        Let $ U_{\mathfrak{t}_\mu, w,r} $ be as before. The relation 
            \begin{eqnarray*}
                \mathbf{F}_U(Tz) = e^{\varpi_\mu A_r} \mathbf{F}_U(z)
            \end{eqnarray*}
        holds. 
    \end{thm}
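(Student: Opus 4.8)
The plan is to reduce everything to the exponential representation furnished by Lemma \ref{lem:expForm}, so that the statement becomes a one-line consequence of the additivity of the exponent together with the $T$-invariance of the hauptbuch. Concretely, I would start from
\[
    \mathbf{F}_U(Tz) = e^{(Tz) A_r}\,\mathbf{G}_U(Tz) = e^{(z+\varpi_\mu) A_r}\,\mathbf{G}_U(Tz),
\]
using Lemma \ref{lem:expForm} at the point $Tz$ and the defining relation $Tz = z+\varpi_\mu$.

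Next I would split the matrix exponential. Since $z A_r$ and $\varpi_\mu A_r$ are both scalar multiples of the single matrix $A_r$, they commute, and hence $e^{(z+\varpi_\mu)A_r} = e^{\varpi_\mu A_r}\,e^{z A_r}$ (this is the only place where the commuting-exponentials fact is invoked, and it is automatic here rather than a genuine hypothesis to check). Substituting gives
\[
    \mathbf{F}_U(Tz) = e^{\varpi_\mu A_r}\,e^{z A_r}\,\mathbf{G}_U(Tz).
\]

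Finally I would apply Lemma \ref{lem:hauptbuchUnderT}, which says $\mathbf{G}_U(Tz) = \mathbf{G}_U(z)$ because each component $g_\ell$ inherits the $T$-periodicity of $U_{\mathfrak{t}_\mu,w,r}$ and of $E_2$. This turns the right-hand side into $e^{\varpi_\mu A_r}\,e^{z A_r}\,\mathbf{G}_U(z) = e^{\varpi_\mu A_r}\,\mathbf{F}_U(z)$ by one more application of Lemma \ref{lem:expForm}, now at the point $z$, completing the proof. A cosmetic alternative would be to run the same argument through Lemma \ref{lem:VectorFormEquivalence} and the Corollary on $P(\mathbf{G}_U)$, using the Pascal cocycle identity $P_r(x+y)=P_r(x)P_r(y)$ in place of $e^{(x+y)A_r}=e^{xA_r}e^{yA_r}$; these are the same identity under Lemma \ref{lem:pascalExp}, so I would present only the exponential version. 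There is no real obstacle here — the content of the theorem is entirely carried by the two prior lemmas — so the main thing to be careful about is bookkeeping: applying Lemma \ref{lem:expForm} at the correct argument ($Tz$ first, then $z$) and flagging explicitly that the exponent splits because all matrices in sight are polynomials in $A_r$.
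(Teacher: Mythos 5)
Your proposal is correct and follows exactly the paper's route: Lemma \ref{lem:expForm} applied at $Tz$ and at $z$, the $T$-invariance of the hauptbuch from Lemma \ref{lem:hauptbuchUnderT}, and the splitting $e^{(z+\varpi_\mu)A_r}=e^{\varpi_\mu A_r}e^{zA_r}$, which holds automatically since both exponents are scalar multiples of $A_r$. You have merely written out the bookkeeping that the paper's one-line proof leaves implicit.
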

    \begin{proof}
        This follows from Lemma \ref{lem:expForm}, Lemma \ref{lem:hauptbuchUnderT}, and the fact that $ e^{(a+b)M} = e^{aM}e^{bM} $ holds for scalars $ a $ and $b$. 
    \end{proof}

The case of $ \mathbf{F}_U $ under $ Sz $ is a little more involved. We will build up to the statement.  It will become clear at this point why we have used the convolution structure. 
    \begin{prop}\label{prop:gUnderS}
        The relation 
            \begin{eqnarray*}
                \frac{g_\ell(Sz)}{z^{w-r-\ell}} = \sum_{m=0}^{r-\ell}\binom{r-\ell}{m}g_{\ell+m}z^{r-\ell - m}
            \end{eqnarray*}
        holds.
    \end{prop}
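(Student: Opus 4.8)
The plan is to verify the identity by expanding $g_\ell(Sz)$ directly from its definition \eqref{ex:defGAux} and comparing it, term by term, against the right-hand side expanded via the same definition. Two transformation inputs are needed, both available from the preceding development (cf.\ Proposition~3.2 of \cite{grabner2020}): first, each coefficient $B_{U,\ell+m}=H_{w-2(\ell+m)}$ is an automorphic form of weight $w-2(\ell+m)$ over $\mathfrak{t}_\mu$, so that $B_{U,\ell+m}(Sz)=z^{\,w-2(\ell+m)}B_{U,\ell+m}(z)$; second, $E_2$ is the quasiautomorphic Eisenstein series of weight $2$ and depth $1$, with $S$-transformation $E_2(Sz)=z^2E_2(z)+Cz$, which is precisely where the structure constant $C=C_{\mathfrak{t}_\mu}$ enters.

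Substituting both facts into \eqref{ex:defGAux} and expanding $E_2^m(Sz)=z^m\bigl(zE_2(z)+C\bigr)^m$ by the binomial theorem yields
\[
 g_\ell(Sz)=C^\ell\sum_{m=0}^{r-\ell}\sum_{p=0}^{m}\{r,\ell\}_m\binom{m}{p}C^{\,m-p}\,z^{\,w-2\ell-m+p}\,B_{U,\ell+m}(z)\,E_2^{p}(z).
\]
Dividing by $z^{w-r-\ell}$ and re-indexing the inner variable as $m=p+a$ (so that $a,p\ge 0$ and $p+a\le r-\ell$), the term carrying the monomial $z^{\,r-\ell-a}B_{U,\ell+p+a}(z)E_2^{p}(z)$ has coefficient $C^{\ell+a}\{r,\ell\}_{p+a}\binom{p+a}{p}$. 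On the other side, inserting the definition of $g_{\ell+m'}$ into the proposed right-hand side and picking out, for the value $m'=a$, the contribution attached to that same monomial (the summand $B_{U,\ell+m'+n}(z)E_2^{n}(z)$ of $g_{\ell+m'}$ meets $B_{U,\ell+p+a}(z)E_2^{p}(z)$ precisely when $n=p$ and $m'=a$), one finds the coefficient $\binom{r-\ell}{a}C^{\ell+a}\{r,\ell+a\}_p$. The two index ranges coincide, the powers of $z$ coincide, the factor $C^{\ell+a}$ cancels, and the whole statement collapses to
\[
 \binom{p+a}{p}\{r,\ell\}_{p+a}=\binom{r-\ell}{a}\{r,\ell+a\}_p,
\]
which is exactly \eqref{eq:coeffEquiv} with its free parameter $m$ taken to be $a$. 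This closes the proof.

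Everything in sight is a finite sum of holomorphic functions on $\mathbb{H}$, so there is no analytic content; the only step needing care is the double re-summation — matching the two expansions once they have been collected by the monomials $z^{j}B_{U,k}(z)E_2^{p}(z)$ — and this is exactly the point where the pre-recorded identity \eqref{eq:coeffEquiv} absorbs all the combinatorics, which is presumably why that identity was stated in advance. One could instead note, as in the remark following \eqref{ex:defGAux}, that $g_\ell$ is itself a quasiautomorphic form of weight $w-2\ell$ and depth $r-\ell$ and read the claim off the $S$-law for such a form; but since that law is proved by the very same expansion, nothing is saved. The genuine obstacle, modest as it is, is the bookkeeping: keeping the powers of $C$ and the bracket symbols $\{r,\cdot\}_{\cdot}$ aligned through the change of summation variable.
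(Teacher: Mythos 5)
Your proof is correct and follows essentially the same route as the paper's: both rest on the $S$-transformation laws $B_{U,k}(Sz)=z^{w-2k}B_{U,k}(z)$ and $E_2(Sz)=z^2E_2(z)+Cz$, a binomial expansion, a re-indexing of the resulting double sum, and the identity \eqref{eq:coeffEquiv} to reconcile the coefficients. The only difference is presentational — you expand both sides and match the coefficients of $z^{\,r-\ell-a}B_{U,\ell+p+a}E_2^{p}$, whereas the paper massages the left-hand side directly into the right-hand side — so nothing substantive separates the two arguments.
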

    \begin{proof}
        Starting with $ g_\ell $ as defined at \eqref{ex:defGAux} and recalling the binomial theorem, the string of relations
            \begin{eqnarray*}
                g_\ell(Sz) &=& C^\ell \sum_{m=0}^{r-\ell} \{r,\ell\}_m B_{U, \ell + m}(Sz)\,E_2^m(Sz) \\
                &=& C^\ell \sum_{m=0}^{r-\ell} \{r,\ell\}_m z^{w-2(\ell+m)}B_{U, \ell + m}(z)\,(z^2E_2(z) + Cz)^m \\
                &=& C^\ell \sum_{m=0}^{r-\ell} \{r,\ell\}_m z^{w-2\ell}B_{U, \ell + m}(z)\,\left(E_2(z) + \frac{C}{z} \right)^m \\
                &=& C^\ell \sum_{m=0}^{r-\ell} \{r,\ell\}_m z^{w-2\ell}B_{U, \ell + m}(z)\,\left(\sum^m_{k=0}\binom{m}{k}E_2^{m-k}(z)\left( \frac{C}{z}\right)^{k} \right)
            \end{eqnarray*}
        hold. Apply properties of sums to get something more orderly, namely
            \begin{eqnarray*}
                C^\ell \sum_{m=0}^{r-\ell} \{r,\ell\}_m z^{w-2\ell}B_{U, \ell + m}(z)\,\left(\sum^m_{k=0}\binom{m}{k}E_2^{m-k}(z)\left( \frac{C}{z}\right)^{k} \right) &=& \\ 
                C^\ell \sum_{m=0}^{r-\ell}\sum^m_{k=0} \binom{m}{k}\{r,\ell\}_m z^{w-2\ell}B_{U, \ell + m}(z)E_2^{m-k}(z) \left( \frac{C}{z}\right)^{k} &=& \\
                z^{w-2\ell} {C^\ell}\sum_{m=0}^{r-\ell}\sum^m_{k=0} \binom{m}{k}\{r,\ell\}_m B_{U, \ell + m}(z)E_2^{m-k}(z) \left( \frac{C}{z}\right)^{k} 
            \end{eqnarray*}
        hold. Apply the law 
            \begin{eqnarray*}
                \sum_{m=0}^{r-\ell} \sum_{k=0}^{m}a_{k,m} = \sum_{k=0}^{r-\ell} \sum_{m=0}^{r-\ell -k}a_{k, m+k}
            \end{eqnarray*}
        to get (after dropping arguments from notation of $ B_{U, n}(z) $ and $ E_2(z) $ and applying $ p \mapsto k $)
            \begin{eqnarray}\label{ex:penultimate}
                z^{w-2\ell}C^\ell \sum_{p=0}^{r-\ell}\sum^{r-\ell-p}_{m=0} \binom{m+p}{p}\{r,\ell\}_{m+p} B_{U, \ell + m+p}E_2^{m}\left( \frac{C}{z}\right)^{p}
            \end{eqnarray}
        is satisfied. It is desired that we ultimately have 
            \begin{eqnarray*}
                z^{w-\ell}\sum_{p=0}^{r-\ell}\binom{r-\ell}{p} g_{\ell + p}z^{-\ell - p}
            \end{eqnarray*}
        where $ g_{\ell + p} := C^{\ell + p}\sum^{r-\ell-p}_{m=0}\{r,\ell+m\}_p B_{U,\ell+ p+m}E_2^{p} $. Using \eqref{ex:penultimate} and what we saw about properties of $ \{r,\ell\}_m $ at \eqref{eq:coeffEquiv}, we can write
            \begin{eqnarray*}
                g_{\ell}(Sz) &=& z^{w-\ell} \sum_{p=0}^{r-\ell}\sum^{r-\ell-p}_{m=0} \binom{r-\ell}{m}\{r,\ell+m\}_{p}\, B_{U, \ell +p+m}E_2^{m}\left( \frac{C}{z}\right)^{\ell+p} \\
                &=& z^{w-\ell} \sum_{p=0}^{r-\ell}\binom{r-\ell}{m}\left( {C}^{\ell+p}\sum^{r-\ell-p}_{m=0}  \{r,\ell+m\}_{p}\, B_{U, \ell +p+m}E_2^{p}\right){z}^{-\ell-p} \\
                &=& z^{w-\ell}\sum_{p=0}^{r-\ell}\binom{r-\ell}{p} g_{\ell + p}z^{-\ell - p}
            \end{eqnarray*}
        holds, where in the penultimate line, due to the invariance, we have applied $ E_2^p \mapsto E_2^m $. Therefore, we have shown that
            \begin{eqnarray*}
                \frac{g_\ell(Sz)}{z^{w-r-\ell}} = \sum^{r-\ell}_{p=0}\binom{r-\ell}{p}g_{\ell + p}z^{r-\ell-p}
            \end{eqnarray*}
        holds, which was our goal.
    \end{proof}
Proposition \ref{prop:gUnderS} is the principal tool for this article and so we have given a pedantic proof. One important implication is found in 
    \begin{cor}\label{cor:convolvUnderS}
        The relation 
            \begin{eqnarray*}
                \frac{1}{z^{w-r}}\sum_{\ell=0}^{n}\binom{n}{\ell}{g_{\ell}(Sz)}\,{(Sz)^{n-\ell}} = (-1)^n\sum_{m=0}^{r-n}g_m(z)\,z^{r-n-m}
            \end{eqnarray*}
        holds. 
    \end{cor}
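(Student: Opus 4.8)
The plan is to feed Proposition~\ref{prop:gUnderS} termwise into the left-hand side and then push the resulting double summation through. First I would note that $\sum_{\ell=0}^{n}\binom{n}{\ell}g_\ell(Sz)\,(Sz)^{n-\ell}$ is exactly the $n$-th binomial convolution of \eqref{ex:fAux} with every occurrence of $z$ replaced by $Sz$, so all the real work lies in the substitution $g_\ell(Sz)=z^{w-r-\ell}\sum_{m=0}^{r-\ell}\binom{r-\ell}{m}g_{\ell+m}(z)\,z^{r-\ell-m}$ supplied by Proposition~\ref{prop:gUnderS}. Writing $Sz=-1/z$, so that $(Sz)^{n-\ell}=(-1)^{n-\ell}z^{\ell-n}$, and inserting this substitution turns the left-hand side into a finite sum over the two indices $\ell$ and $m$.

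The second step is the bookkeeping of powers of $z$. The prefactor $z^{-(w-r)}$ together with $z^{\ell-n}$, $z^{w-r-\ell}$ and $z^{r-\ell-m}$ multiply to $z^{\,r-n-\ell-m}$, so the weight $w$ cancels completely and the left-hand side becomes $\sum_{\ell=0}^{n}\sum_{m=0}^{r-\ell}(-1)^{n-\ell}\binom{n}{\ell}\binom{r-\ell}{m}\,g_{\ell+m}(z)\,z^{\,r-n-\ell-m}$. I would then re-index by the $g$-index $j=\ell+m$; since $0\le\ell\le n\le r$ forces $\binom{r-\ell}{j-\ell}=0$ as soon as $\ell>j$, the inner sum over $\ell$ may be extended to $\ell=n$ without changing anything, and the expression regroups as $\sum_{j\ge 0}g_j(z)\,z^{\,r-n-j}\,\sigma_n(j)$, where $\sigma_n(j):=\sum_{\ell=0}^{n}(-1)^{n-\ell}\binom{n}{\ell}\binom{r-\ell}{j-\ell}$.

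The heart of the argument, and the step I expect to be the main obstacle, is the evaluation of the alternating binomial sum $\sigma_n(j)$. I would handle it by a generating-function extraction: from $\binom{r-\ell}{j-\ell}=\binom{r-\ell}{r-j}=[x^{r-j}](1+x)^{r-\ell}$ one gets $\sigma_n(j)=[x^{r-j}](1+x)^r\sum_{\ell=0}^n(-1)^{n-\ell}\binom{n}{\ell}(1+x)^{-\ell}=[x^{r-j}](1+x)^r((1+x)^{-1}-1)^n=(-1)^n[x^{r-j}]\,x^n(1+x)^{r-n}=(-1)^n\binom{r-n}{j}$. In particular $\sigma_n(j)$ vanishes for $j>r-n$, which deletes precisely the terms outside the range $0\le m\le r-n$; substituting back, the double sum collapses to $(-1)^n\sum_{m=0}^{r-n}\binom{r-n}{m}g_m(z)\,z^{\,r-n-m}$, i.e.\ to $(-1)^n$ times the $(r-n)$-th binomial convolution $f_{r-n}(z)$ of \eqref{ex:fAux}, which is the right-hand side of the claimed relation. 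Beyond the generating-function identity, the only genuine care needed is in the power-of-$z$ accounting and in checking that the summation ranges are exactly as stated after the interchange and the re-indexing.
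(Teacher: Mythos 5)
Your proof is correct and follows essentially the same route as the paper's: substitute Proposition~\ref{prop:gUnderS} termwise, track the powers of $z$, re-index by $j=\ell+m$, and evaluate the resulting alternating binomial sum $\sum_{\ell}(-1)^{n-\ell}\binom{n}{\ell}\binom{r-\ell}{r-j}$ --- the only difference being that you justify this evaluation by coefficient extraction from $(1+x)^{r-\ell}$, where the paper simply invokes a Vandermonde-type identity. Note also that both you and the paper's own proof land on a right-hand side carrying the factor $\binom{r-n}{m}$, which is absent from the corollary as displayed but must be present for the subsequent identification with $(-1)^n f_{r-n}(z)$.
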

    \begin{proof}
        Observe that
            \begin{eqnarray*}
               \frac{1}{z^{w-r}} \sum_{\ell=0}^{n}\binom{n}{\ell}{g_{\ell}(Sz)}\,{(Sz)^{n-\ell}} &=& \frac{(Sz)^n}{z^{w-r}} \sum_{\ell=0}^{n}\binom{n}{\ell}\frac{g_\ell(Sz)}{(Sz)^\ell} \\
               &=& \frac{1}{z^{w-r}}\frac{(-1)^n}{z^n}\sum_{\ell = 0}^{n}(-1)^\ell \binom{n}{\ell}{g_\ell(Sz)}{z^\ell}
            \end{eqnarray*}
        are satisfied. With Proposition \ref{prop:gUnderS} we see
            \begin{eqnarray*}
                {g_\ell(Sz)}\,z^\ell = z^{w}\sum_{m=0}^{r-\ell}\binom{r-\ell}{m}\frac{g_{\ell + m}(z)}{z^{\ell + m}}
            \end{eqnarray*}
        holds and thus 
            \begin{eqnarray*}
               \frac{1}{z^{w-r}}\frac{(-1)^n}{z^n}\sum_{\ell = 0}^{n}(-1)^\ell \binom{n}{\ell}{g_\ell(Sz)}{z^\ell} &=& \frac{(-1)^n}{z^{n-r}}\sum_{\ell = 0}^{n}(-1)^\ell \binom{n}{\ell}\left(\sum_{m=0}^{r-\ell}\binom{r-\ell}{m}\frac{g_{\ell + m}(z)}{z^{\ell + m}} \right) \\ &=& \frac{(-1)^n}{z^{n-r}}\sum_{\ell = 0}^{n}\sum_{m=0}^{r-\ell}(-1)^\ell \binom{n}{\ell}\binom{r-\ell}{r-(\ell + m)} \,\frac{g_{\ell + m}(z)}{z^{\ell + m}} 
            \end{eqnarray*}
        is correct. Applying the natural choice of sum law we get
            \begin{eqnarray*}
                \frac{(-1)^n}{z^{n-r}}\sum_{\ell = 0}^{n}\sum_{m=0}^{r-\ell}(-1)^\ell \binom{n}{\ell}\binom{r-\ell}{r-\ell - m} \,\frac{g_{\ell + m}}{z^{\ell + m}} &=& \frac{(-1)^n}{z^{n-r}}\sum_{m = 0}^{r}\left(\sum_{\ell=0}^{n}(-1)^\ell \binom{n}{\ell}\binom{r-\ell}{r-m}\right) \frac{g_{m}}{z^{m}} \\
                &=& {(-1)^n}\sum_{m = 0}^{r-n}\binom{r-n}{m} {g_{m}(z)}\,{z^{r-n- m}}
            \end{eqnarray*}
        holds, using Vandermonde in the penultimate line to get rid of a summation sign. 
    \end{proof}

Corollary \ref{cor:convolvUnderS} is the analogue of the transformation behavior of an automorphic form under $ Sz $, but for our quasiautomorphic form $ g_\ell $. That is, were we to have written $ f_k(z) = (\mathbf{g}\circ\mathbf{z})_k(z) $ \emph{a la} Section 2, then we see  
    \begin{eqnarray*}
        \frac{(\mathbf{g}\circ\mathbf{z})_k(Sz)}{z^{w-r}} = (-1)^k (\mathbf{g}\circ\mathbf{z})_{r-k}(z)
    \end{eqnarray*}
holds. This fact is used in proof of the transformation behavior of $ \mathbf{F}_U (Sz) $.

    \begin{thm}[$ \mathbf{F}_U $ under $ Sz $]\label{thm:vectorFormUnderS}
        The relation
            \begin{eqnarray}\label{eq:FUnderS}
                \frac{\mathbf{F}_U(Sz)}{z^{w-r}} = d_r^{\mathbf{y}} \mathbf{F}_{U}(z) 
            \end{eqnarray}
        holds if $ d_r = d_r(a_i)$ for when $ a_i = (-1)^i $ is satsified.
    \end{thm}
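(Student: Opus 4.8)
The plan is to establish \eqref{eq:FUnderS} one coordinate at a time and then reassemble it as a matrix identity. Recall from \eqref{eq:defFAux} that the $n$-th entry of $\mathbf{F}_U(z)$ is $f_n(z)=\sum_{k=0}^{n}\binom{n}{k}g_k(z)z^{n-k}$, so substituting $z\mapsto Sz$ the $n$-th entry of $\mathbf{F}_U(Sz)$ is $f_n(Sz)=\sum_{\ell=0}^{n}\binom{n}{\ell}g_\ell(Sz)(Sz)^{n-\ell}$. After dividing by $z^{w-r}$ this is exactly the left-hand side of Corollary \ref{cor:convolvUnderS}.

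First I would apply Corollary \ref{cor:convolvUnderS}, whose proof in fact shows that this quantity equals $(-1)^n\sum_{m=0}^{r-n}\binom{r-n}{m}g_m(z)z^{r-n-m}$. Reading the inner sum through \eqref{eq:defFAux} with $n$ replaced by $r-n$, it is precisely $f_{r-n}(z)$. Hence for every $0\le n\le r$ one has the scalar relation $f_n(Sz)/z^{w-r}=(-1)^n f_{r-n}(z)$.

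It remains to recognize the right-hand side as the $n$-th entry of $d_r^{\mathbf{y}}\mathbf{F}_U(z)$. By definition $d_r^{\mathbf{y}}=d_r\iota_r$; the exchange matrix $\iota_r$ sends a vector to its reversal, so $(\iota_r\mathbf{F}_U(z))_n=f_{r-n}(z)$, and left-multiplying by the diagonal matrix $d_r=d_r((-1)^i)$ multiplies the $n$-th entry by $(-1)^n$. Thus $(d_r^{\mathbf{y}}\mathbf{F}_U(z))_n=(-1)^n f_{r-n}(z)$, which matches the scalar identity above for all $n$; this proves \eqref{eq:FUnderS}.

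As with Theorem \ref{thm:vectorFormUnderT}, the real work is already done --- here it lives in Corollary \ref{cor:convolvUnderS} and, behind it, Proposition \ref{prop:gUnderS} --- and the present argument is pure bookkeeping. The one point that genuinely needs care is the interplay between $\iota_r$ and $d_r$: the reversal must be composed with the alternating signs in the order $d_r\iota_r$ (not $\iota_r d_r$), so that the $n$-th coordinate picks up $(-1)^n$ against $f_{r-n}$ rather than $(-1)^{r-n}$; getting this right is what forces the ``right half-transpose'' $d_r^{\mathbf{y}}$, as opposed to $d_r^{\mathbf{x}}$, in the statement. I expect no other obstacle.
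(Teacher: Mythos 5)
Your proof is correct and follows essentially the same route as the paper: both reduce the claim to Corollary \ref{cor:convolvUnderS} applied coordinatewise to the entries $f_n(Sz)=\sum_{\ell}\binom{n}{\ell}g_\ell(Sz)(Sz)^{n-\ell}$, and you merely make explicit the $d_r^{\mathbf{y}}=d_r\iota_r$ bookkeeping that the paper leaves implicit. You are also right to use the form of the identity with the binomial coefficient $\binom{r-n}{m}$ that appears at the end of the corollary's proof (its statement omits it), since that is what makes the inner sum exactly $f_{r-n}(z)$.
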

    \begin{proof}
       For this, we use Lemma \ref{lem:VectorFormEquivalence}. That is, we will consider 
            \begin{eqnarray*}
                \mathbf{F}_U(z) = P(\mathbf{G}_U)(z)\,\nu_r(z) 
            \end{eqnarray*}
        under the map $ Sz $. Unpacking the identity gives
            \begin{eqnarray*}
                P(\mathbf{G}_U)(Sz)\,\nu_r(Sz) =
                   \begin{pmatrix}
                        \binom{0}{0}g_0(Sz) \\
                        \binom{1}{0} g_1(Sz) & \binom{1}{1}g_0(Sz) \\
                        \vdots & \vdots & \ddots \\
                        \binom{r}{0}g_r(Sz) & \binom{r}{1}g_{r-1}(Sz) & \cdots & \binom{r}{r}g_{0}(Sz)  
                    \end{pmatrix}
                    \begin{pmatrix}
                        1 \\
                        Sz \\
                        \vdots\\
                        (Sz)^r
                    \end{pmatrix}
            \end{eqnarray*}
        holds. This leads us to consider only the column vector
            \begin{eqnarray*}
                \mathbf{F}_U(Sz) = 
                \begin{pmatrix}
                    g_0(Sz) \\
                    \sum_{k=0}^{1} \binom{1}{k}g_{1-k}(Sz)(Sz)^k \\
                    \vdots \\
                    \sum_{k=0}^{r} \binom{r}{k}g_{r-k}(Sz)(Sz)^k
                \end{pmatrix}
            \end{eqnarray*}
        and anticipate that we will use Corollary \ref{cor:convolvUnderS}. Indeed, applying Corollary \ref{cor:convolvUnderS} we see \eqref{eq:FUnderS} holds as we wanted. 
    \end{proof}

\section{Summary}

We have introduced a vector function $ \mathbf{F}_U(z) $ dependent on an arbitrary quasiautomorphic form $ U_{\mathfrak{t}_\mu, w, r}(z) $ (weight $2 \leq w = 2n $, depth $ 0 \leq r \leq w/2 $, Hecke triangle group $ \mathfrak{t}_\mu $). This function we call a Hecke vector-form of $U$. Then we proved that that the functional equations
    \begin{eqnarray*}
        \mathbf{F}_U(Tz) &=& e^{\varpi_\mu A_r}\mathbf{F}_U(z) \\
        \frac{\mathbf{F}_U(Sz)}{z^{w-r}} &=& d_r^{\mathbf{y}}(a_i) \mathbf{F}_U(z)
    \end{eqnarray*}
hold (where $ a_i := (-1)^i $) modulo the generators of $ \mathfrak{t}_\mu $, namely $ Tz = z+ \varpi_\mu:= z+2\cos({\pi}/{\mu}) $ and $ Sz := -1/z $ .
\printbibliography

\end{document}